\newtheorem{theorem}{Theorem}[section]
\theoremstyle{plain}
\newtheorem{Application}{Application}[section]
\newtheorem{corollary}{Corollary}[section]
\newtheorem{definition}{Definition}[section]
\newtheorem{lemma}{Lemma}[section]
\newtheorem{remark}{Remark}[section]
\numberwithin{equation}{section}
\begin{document}
\title[Bivariate log-convexities of the more extended means]{Bivariate
log-convexity of the more extended means and its applications}
\author{Zhen-Hang Yang}
\address{System Division, Zhejiang Province Electric Power Test and Research
Institute, Hangzhou, Zhejiang, China, 310014}
\email{yzhkm@163.com}
\date{March 28, 2010}
\subjclass[2000]{Primary 26B25, 26D07; Secondary 26E60,26A48}
\keywords{Stolarsky means, Gini means, More\ extended means, Two-parameter
homogeneous function, bivariate log-convexity, inequality}
\thanks{This paper is in final form and no version of it will be submitted
for publication elsewhere.}

\begin{abstract}
In this paper, the bivariate log-convexity of the two-parameter homogeneous
function in parameter pair is vestigated. From this the bivariate
log-convexity of the more extended means with respect to a parameter pair is
solved. It follows that Stolarsky means, Gini means, two-parameter identric
(exponential) means and two-parameter Heronian means are all bivariate
log-concave on $\mathbb{[}0\mathbb{,\infty )}^{2}$ and log-convex on $%
\mathbb{(-\infty ,}0\mathbb{]}^{2}$ with respect to parameters. Lastly, some
classical and new inequalities for means are given.
\end{abstract}

\maketitle

\section{\textsc{Introduction}}

\ Throughout the paper we denote by $\mathbb{R}_{+}=(0,\infty )$, $\mathbb{R}%
_{-}=(-\infty ,0)$ and $\mathbb{R=(-\infty ,\infty )}$.

For $a,b\in \mathbb{R}_{+}$ with $a\neq b$ the Stolarsky means $S_{p,q}(a,b) 
$ were defined by Stolarsky \cite{Stolarsky.48.1975} as

\begin{equation}
S_{p,q}(a,b)=\left\{ 
\begin{array}{ll}
\left( {\dfrac{q(a^{p}-b^{p})}{p(a^{q}-b^{q})}}\right) ^{\frac{1}{p-q}} & 
\text{if }p\neq q,pq\neq 0, \\ 
\left( {\dfrac{a^{p}-b^{p}}{p(\ln a-\ln b)}}\right) ^{\frac{1}{p}} & \text{%
if }p\neq 0,q=0, \\ 
\left( {\dfrac{a^{q}-b^{q}}{q(\ln a-\ln b)}}\right) ^{\frac{1}{q}} & \text{%
if }p=0,q\neq 0, \\ 
\exp \left( \dfrac{a^{p}\ln a-b^{p}\ln b}{a^{p}-b^{p}}-\dfrac{1}{p}\right) & 
\text{if }{p=q\neq 0,} \\ 
\sqrt{ab} & \text{if }{p=q=0.}%
\end{array}%
.\right.  \label{1.1}
\end{equation}%
Also, $S_{p,q}(a,a)=a$. The Stolarsky means contain many famous means, for
example, $S_{1,0}(a,b)=L(a,b)$ --the logarithmic mean, $S_{1,1}(a,b)=I(a,b)$
--the identric (exponential) mean, $S_{2,1}(a,b)=A(a,b)$ --arithmetic mean, $%
S_{3/2,1/2}(a,b)=He(a,b)$ --Heronian mean, $%
S_{2p,p}(a,b)=A^{1/p}(a^{p},b^{p})=A_{p}$ --the $p$-order power mean, $%
S_{p,0}(a,b)=L^{1/p}(a^{p},b^{p})=L_{p}$ --the $p$-order logarithmic mean, $%
S_{p,p}(a,b)=I^{1/p}(a^{p},b^{p})=I_{p}$ --the $p$-order identric
(exponential) mean, etc.

Stolarsky means are also called "extended means" \cite{Leach.85.1978}, or
"difference means" \cite{Pales.131.1988b}, and belong to the "two-parameter
family of bivariate means".

Another well-known two-parameter family of bivariate means was introduced by
C. Gini in \cite{Gini.13.1938}. That is defined as

\begin{equation}
G_{p,q}(a,b)=\left\{ 
\begin{array}{ll}
\left( {\dfrac{a^{p}+b^{p}}{a^{q}+b^{q}}}\right) ^{1/(p-q)} & \text{if }%
p\neq q, \\ 
\exp \left( \dfrac{a^{p}\ln a+b^{p}\ln b}{a^{p}+b^{p}}\right) & \text{if }{%
p=q.}%
\end{array}%
\right.  \label{1.2}
\end{equation}%
The Gini means also contain many famous means, for example, $%
G_{1,0}(a,b)=A(a,b)$ --arithmetic mean, $G_{1,1}(a,b)=Z(a,b)$ --the
power-exponential mean, $G_{p,0}(a,b)=A^{1/p}(a^{p},b^{p})=A_{p}$ --the $p$%
-order power mean, $G_{p,p}(a,b)=Z^{1/p}(a^{p},b^{p})=Z_{p}$ --the $p$-order
power-exponential mean, etc.

It is clear that Stolarsky and Gini means both have the form of $\left( 
\frac{f(a^{p},b^{p})}{f(a^{q},b^{q})}\right) ^{1(p-q)}$, where $f$ is a
positive homogeneous function and $p\neq q$. A function in this form is
called a "two-parameter homogeneous function generated by $f$" by Yang \cite%
{Yang.6(4).101.2005} and denote by $\mathcal{H}_{f}(p,q;a,b)$. Furthermore,
if it is a mean of positive numbers $a$ and $b$ for every $p,q$, then all
these means are members of "two-parameter family of bivariate means" and,
for short, "two-parameter $f$-means".

Substituting $f=S_{r,s}(x,y)$ into $\mathcal{H}_{f}(p,q;a,b)$, Yang \cite%
{Yang.8(2005).3.Art.8, Yang.149286.2008} defined a class of more general
two-parameter means without trying to prove it, that is, "four-parameter
homogeneous means" denote by $\boldsymbol{F}$$(p,q;r,s;a,b)$. Witkowski
later proved that $\boldsymbol{F}$$(p,q;r,s;a,b)$ are means of positive
numbers $a$ and $b$ in \cite[6.4]{Witokwski.12(1).2009.Art.3}. For sake of
statement, we recall the four-parameter means $\boldsymbol{F}$$(p,q;r,s;a,b)$
as follows.

\begin{definition}[\protect\cite{Yang.149286.2008, Witokwski.12(1).2009}]
Let $(p,q),(r,s)\in \mathbb{R}^{2},(a,b)\in \mathbb{R}_{+}$. Then $%
\boldsymbol{F}$$(p,q;r,s;a,b)$ are called four-parameter means if for $a\neq
b$%
\begin{equation}
\text{$\boldsymbol{F}$}(p,q;r,s;a,b)=\left( {\frac{L(a^{pr},b^{pr})}{%
L(a^{ps},b^{ps})}\frac{L(a^{qs},b^{qs})}{L(a^{qr},b^{qr})}}\right) ^{\frac{1%
}{(p-q)(r-s)}}\text{ if }pqrs(p-q)(r-s)\neq 0,  \label{1.3}
\end{equation}%
or%
\begin{equation}
\boldsymbol{F}(p,q;r,s;a,b)=\left( \frac{a^{pr}-b^{pr}}{a^{ps}-b^{ps}}{\frac{%
a^{qs}-b^{qs}}{a^{qr}-b^{qr}}}\right) ^{\frac{1}{(p-q)(r-s)}}\text{ if }%
pqrs(p-q)(r-s)\neq 0;  \label{1.4}
\end{equation}%
$\boldsymbol{F}(p,q;r,s;a,b)$ are defined as their corresponding limits if $%
pqrs(p-q)(r-s)=0$, for example: 
\begin{eqnarray*}
\boldsymbol{F}(p,p;r,s;a,b) &=&\underset{q\rightarrow p}{\lim }\boldsymbol{F}%
(p,q;r,s;a,b)=\left( {\frac{I(a^{pr},b^{pr})}{I(a^{ps},b^{ps})}}\right) ^{%
\frac{1}{p(r-s)}}\text{ if }prs(r-s)\neq 0,p=q, \\
\text{$\boldsymbol{F}$}(p,0;r,s;a,b) &=&\underset{q\rightarrow 0}{\lim }%
\boldsymbol{F}(p,q;r,s;a,b)=\left( {\frac{L(a^{pr},b^{pr})}{L(a^{ps},b^{ps})}%
}\right) ^{\frac{1}{p(r-s)}}\text{ if }prs(r-s)\neq 0,q=0, \\
\text{$\boldsymbol{F}$}(0,0;r,s;a,b) &=&\underset{p\rightarrow 0}{\lim }%
\text{$\boldsymbol{F}$}(p,0;r,s;a,b)=G(a,b)\text{ if }rs(r-s)\neq 0,p=q=0,
\end{eqnarray*}%
where $L(x,y),I(x,y)$ denote logarithmic mean and identric (exponential)
mean, respectively, $G(a,b)=\sqrt{ab}$. And, $\boldsymbol{F}(p,q;r,s;a,a)=a$.
\end{definition}

As has been mentioned above, the four-parameter homogeneous means $%
\boldsymbol{F}$$(p,q;r,s;a,b)$ are generated by Stolarsky means or extended
means, and clearly contain many two-parameter means, for instance (see \cite[%
Table 1]{Yang.149286.2008}),

\noindent $\boldsymbol{F}(p,q;1,0;a,b)=S_{p,q}(a,b)$ --two-parameter
logarithmic means, that is, Stolarsky means;

\noindent $\boldsymbol{F}(p,q;2,1;a,b)=G_{p,q}(a,b)$ --two-parameter
arithmetic means, that is, Gini means;

\noindent $\boldsymbol{F}(p,q;1,1;a,b)=I_{p,q}(a,b)$ --two-parameter
identric (exponential) means, where $I_{p,q}(a,b)$ is defined as%
\begin{equation}
I_{p,q}(a,b)=\left\{ 
\begin{array}{ll}
\left( {\dfrac{I(a^{p},b^{p})}{I(a^{q},b^{q})}}\right) ^{1/(p-q)} & \text{if 
}p\neq q, \\ 
Y^{1/p}(a^{p},b^{p}) & \text{if }{p=q,}%
\end{array}%
\right.  \label{1.5}
\end{equation}%
here $Y(a,b)=Ie^{{1-G^{2}/L^{2}}},I,L,G$ are identric (exponential),
logarithmic, geometric mean, respectively;

\noindent $\boldsymbol{F}(p,q;3/2,1/2;a,b)=He_{p,q}(a,b)$ --two-parameter
Heronian means, where $He_{p,q}(a,b)$ is defined as%
\begin{equation}
He_{p,q}(a,b)=\left\{ 
\begin{array}{ll}
\left( {\dfrac{a^{p}+(\sqrt{ab})^{p}+b^{p}}{a^{q}+(\sqrt{ab})^{q}+b^{q}}}%
\right) ^{1/(p-q)} & \text{if }p\neq q, \\ 
a^{\frac{a^{p}+(\sqrt{ab})^{p/2}}{a^{p}+(\sqrt{ab})^{p}+b^{p}}}b^{\frac{(%
\sqrt{ab})^{p/2}+b^{p}}{a^{p}+(\sqrt{ab})^{p}+b^{p}}} & \text{if }{p=q,}%
\end{array}%
\right.  \label{1.6}
\end{equation}

\noindent etc. Hence, we prefer to call the four-parameter homogeneous means 
$\boldsymbol{F}$$(p,q;r,s;a,b)$ as "more extended means" in what follows.

Study of two-parameter means containing Stolarsky and Gini means is
interesting and has attracted the attention of a considerable number of
mathematicians.

The monotonicity of $S_{p,q}(a,b)$ with respect to parameter $p$ and $q$ was
verified by Leach and Sholander \cite{Leach.85.1978}. Qi (\cite%
{Guo.29(2).1999} \cite{Qi.126(11).1998} \cite{Qi.211.1997} \cite%
{Qi.22(2).1999} \cite{Qi.9(1).1999} \cite{Qi.224(2).1998}) \cite%
{Qi.3(3).2000}) has also researched by using different ideas and simpler
methods. Gini \cite{Gini.13.1938} showed $G_{p,q}(a,b)$ increases with
either $p$ or $q$ (see also \cite{Farnsworth.93(8).1986}). Yang \cite%
{Yang.6(4).101.2005} gave two unified discriminants for monotonicities of
the two-parameter homogeneous functions $\mathcal{H}_{f}(p,q;a,b)$ and
obtained the same monotonicity results again. Moreover, Yang \cite%
{Yang.149286.2008} has proved that the more extended means $\boldsymbol{F}%
(p,q;r,s;a,b)$ are strictly increasing (decreasing) in either $p$ or $q$ if $%
r+s>(<)0$.

The comparison problem for Stolarsky means $S_{p,q}(a,b)\leq S_{r,s}(a,b)$ ($%
a,b\in \mathbb{R}_{+}$) was solved by Leach and Sholander \cite%
{Leach.92.1983}. P\'{a}les \cite{Pales.131.1988b} presented a new proof for
this result. In \cite{Pales.32.1989} P\'{a}les solved the same comparison
problem on any subinterval $(\alpha ,\beta )$ of $\mathbb{R}_{+}$. The same
comparison problem of Gini means $G_{p,q}(a,b)$ was dealt with by P\'{a}les 
\cite{Pales.131.1988a, Pales.103.1992}. Applying unified comparison theorem
given in \cite{Pales.32.1989}, a more generalized comparison problem for
more extended means $\boldsymbol{F}$$(p,q;r,s;a,b)\leq \boldsymbol{F}$$%
(u,v;r,s;a,b)$ ($a,b\in \mathbb{R}_{+}$) has been solved by Witkowski
recently (see \cite{Witokwski.12(1).2009}). Other references involving these
comparison problems can be found in \cite{Dodd.12.1971}, \cite%
{Carlson.79.1972}, \cite{Brenner.3.1978}, \cite{Burk.92.1985}, \cite%
{Pales.32.1989}, \cite{Losonczi.7(1).2002}, \cite{Neuman.15(2).2003}, \cite%
{Czinder.9(4).2006}

Losonczi and P\'{a}les \cite{Losonczi.62.1996, Losonczi.126.1998}
established the Minkowski inequalities for the Stolarsky in 1996 and Gini
means in 1998, respectively. Since symmetry of the two classes of means, and
so they perfectly dealt with the bivariate convexities with respect to $%
(a,b) $ actually. The H\"{o}lder type inequality, that is, geometrical
convexity for Stolarsky means $S_{p,q}(a,b)$ was proved by Yang \cite%
{Yang.6(4).17.2005, Yang.4(34).2010}.

For the univariate log-convexity of Stolarsky means with respect to
parameters, Qi first proved that

\textbf{Theorem Q (}\cite[Theorem 1]{Qi.130(6)(2002)}\textbf{). }\textit{If }%
$p,q\geq 0$\textit{, then }$S_{p,q}(a,b)$\textit{\ are log-concave in both }$%
p$\textit{\ and }$q$\textit{. If }$p,q\leq 0$\textit{, then }$S_{p,q}(a,b)$%
\textit{\ are log-convex in both }$p$\textit{\ and }$q$\textit{.}

In 2003, Neuman and S\'{a}ndor \cite{Neuman.14(1).2003} showed this property
is also true for Gini means $G_{p,q}(a,b)$. Four years later Yang \cite%
{Yang.10(3).2007} presented an unified treatment for log-convexity of the
two-parameter homogeneous functions $\mathcal{H}_{f}(p,q;a,b)$ in either $p$
or $q$, and obtained the Qi's and Neuman and S\'{a}ndor's results again. In
2008, this unified treatment was applied to the more extended means $%
\boldsymbol{F}$$(p,q;r,s;a,b)$ and obtained the following

\textbf{Theorem Y (}\cite[Theorem 2.3]{Yang.149286.2008}\textbf{). }\textit{%
If }$r+s>(<)0$\textit{, then }$F(p,q;r,s;a,b)$\textit{\ are strictly
log-concave (log-convex) in either }$p$\textit{\ or }$q$\textit{\ on }$%
(0,\infty )$\textit{\ and log-convex (log-concave) on }$(-\infty ,0)$\textit{%
.}

In 2010 Yang \cite{Yang.1(1).2010} further proved that\ for fixed $q\in 
\mathbb{R}$ the two-parameter symmetric homogeneous function $\mathcal{H}%
_{f}(p,q;a,b)$ is strictly log-convex (log-concave) in $p$ on $(\frac{|q|-q}{%
2},\infty )$ and log-concave (log-convex) on $(-\infty ,-\frac{|q|+q}{2})$
if ${{\mathcal{J}=(x-y)(x}\mathcal{I}{)_{x}<(>)0}}$, where{\ }${\mathcal{I}%
=(\ln f)_{xy}}$. This improved the Qi's and Neuman and S\'{a}ndor's results.

It might be because of complexity, up to now the research for log-convexity
of the two-parameter means in parameters has only been limited to the
univariate cases. In other words, that research for bivariate
log-convexities of the two-parameter means in parameter pair remains a blank.

The aim of this paper is to investigate the bivariate log-convexity of the
more extended means $\boldsymbol{F}$$(p,q;r,s;a,b)$ in a parameter pair $%
(p,q)$. Our approach is to use the well properties of homogeneous functions
and an integral representation for the two-parameter homogeneous functions%
\emph{\ }$\mathcal{H}_{f}(p,q;a,b)$ to obtain an unified discriminants for
the bivariate log-convexity in parameter pair $(p,q)$. From this the problem
for bivariate log-convexity of the more extended means $\boldsymbol{F}$$%
(p,q;r,s;a,b)$ with respect to a parameter pair is successfully solved. As
special cases, the bivariate log-convexity of the Stolarsky means, Gini
means, two-parameter identric (exponential) means and two-parameter Heronian
means with respect to parameters easily follows.

Additionally, we also investigate a special two-parameter homogeneous
function but not a mean passingly, whose bivariate log-convexity can
generate some interesting and useful inverse inequalities for Stolarsky and
Gini means.

\section{\textsc{Bivariate Log-convexity of Two-parameter Homogeneous
Functions }}

In this section, we will deal with bivariate log-convexity of two-parameter
homogeneous function generated by $f$ with respect to parameters. For this
end, we first recall definition of two-parameter homogeneous function
generated by $f$ as follows.

\begin{definition}
\label{d2.1}Let $f$: $\mathbb{R}_{+}^{2}\backslash \{(x,x),x\in \mathbb{R}%
_{+}\}\rightarrow \mathbb{R}_{+}$ be a homogeneous, continuous function and
has first partial derivatives. Then the function $\mathcal{H}_{f}:\mathbb{R}%
^{2}\times \mathbb{R}_{+}^{2}\rightarrow \mathbb{R}_{+}$ is called a
homogeneous function generated by $f$ with parameters $p$ and $q$ if $%
\mathcal{H}_{f}$ is defined by for $a\neq b$ 
\begin{eqnarray}
\mathcal{H}_{f}(p,q;a,b) &=&\left( {\frac{f(a^{p},b^{p})}{f(a^{q},b^{q})}}%
\right) ^{1(p-q)}\text{ if }pq(p-q)\neq 0,  \label{2.1} \\
\mathcal{H}_{f}(p,p;a,b) &=&\exp \left( \frac{a^{p}f_{x}(a^{p},b^{p})\ln
a+b^{p}f_{y}(a^{p},b^{p})\ln b}{f(a^{p},b^{p})}\right) \text{ if }p=q\neq 0,
\label{2.2}
\end{eqnarray}%
where $f_{x}(x,y)\ $and $f_{y}(x,y)\ $denote first-order partial derivatives
with respect to first and second component of $f(x,y)$,$\ $respectively; if $%
\underset{y\rightarrow x}{\lim }f(x,y)$ exits and is positive for all $x\in 
\mathbb{R}_{+}$, then further define%
\begin{eqnarray}
\mathcal{H}_{f}(p,0;a,b) &=&\left( {\frac{f(a^{p},b^{p})}{f(1,1)}}\right)
^{1/p}\text{ if }p\neq 0,q=0,  \label{2.3} \\
\mathcal{H}_{f}(0,q;a,b) &=&\left( {\frac{f(a^{q},b^{q})}{f(1,1)}}\right)
^{1/q}\text{ if }p=0,q\neq 0,  \label{2.4} \\
\mathcal{H}_{f}(0,0;a,b) &=&a^{\frac{f_{x}(1,1)}{f(1,1)}}b^{\frac{f_{y}(1,1)%
}{f(1,1)}}\text{ if }p=q=0,  \label{2.5}
\end{eqnarray}%
and, $\mathcal{H}_{f}(p,q;a,a)=a$.
\end{definition}

\begin{remark}
Witkowski \cite{Witokwski.12(1).2009.Art.3} proved that if $f(x,y)$ is a
symmetric and $1$-order homogeneous function, then for all $p,q$ $\mathcal{H}%
_{f}(p,q;a,b)$ is a mean of positive numbers $x$ and $y$ if and only if $%
f(x,y)$ is increasing in both variables on $\mathbb{R}_{+}$. In fact, it is
easy to see that the condition "$f(x,y)$ is symmetry" can be removed.
\end{remark}

For simpleness, $\mathcal{H}_{f}(p,q;a,b)$ is also denoted by $\mathcal{H}%
_{f}(p,q)$ or $\mathcal{H}_{f}(a,b)$.

From the published relative literatures, it is complex and difficult to
investigate the monotonicity and log-convexity of $\mathcal{H}_{f}(p,q;a,b)$
for a concrete function $f$ such as $f=L,A,I$. But it becomes to be simple
and easy by using well properties of homogeneous function, which have played
an important role in \cite{Yang.6(4).101.2005}, \cite{Yang.10(3).2007}. We
will show these properties is still useful and efficient in the study of
bivariate log-convexity of $\mathcal{H}_{f}(p,q;a,b)$ with respect to
parameters.

\begin{lemma}[{\protect\cite[Lemma 3, 4]{Yang.10(3).2007}}]
\label{lem2.1}{Let $f:\mathbb{R}_{+}^{2}\backslash \{(x,x):x\in \mathbb{R}%
_{+}\}\rightarrow \mathbb{R}_{+}$}$\ $be a homogenous and three-time
differentiable function. Denote by $T(t):=\ln f(a^{t},b^{t})$ ($t\neq 0)$,
then%
\begin{eqnarray}
{T}^{\prime }(t) &=&\frac{a^{t}f_{x}(a^{t},b^{t})\ln
a+b^{t}f_{y}(a^{t},b^{t})\ln b}{f(a^{t},b^{t})}  \notag \\
{T}^{\prime \prime }(t) &=&-xy\mathcal{I}\ln ^{2}(b/a),\text{where }\mathcal{%
I}=(\ln f)_{xy},  \notag \\
{{T}^{\prime \prime \prime }(t)} &{=}&{-Ct^{-3}\mathcal{J},}\text{ where }{{%
\mathcal{J}=(x-y)(x}\mathcal{I}{)_{x},}C=}\frac{{xy\ln }^{3}{(x/y)}}{{x-y}}{%
>0,}  \label{2.8}
\end{eqnarray}%
where $x=a^{t}$, $y=b^{t}$.
\end{lemma}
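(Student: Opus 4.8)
The plan is to differentiate $T(t)=\ln f(a^{t},b^{t})$ three times by the chain rule, carrying everything in the ``homogeneous variables'' $x=a^{t}$, $y=b^{t}$ (so that $\dot{x}=x\ln a$ and $\dot{y}=y\ln b$), and then to use the homogeneity of $f$ to collapse the resulting expressions into the stated closed forms. Throughout write $F=\ln f$, so that $F_{x}=f_{x}/f$ and $F_{y}=f_{y}/f$.

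First, differentiating $T(t)=F(a^{t},b^{t})$ once gives $T'(t)=x\ln a\,F_{x}+y\ln b\,F_{y}$, which is precisely the asserted expression for $T'$. Differentiating again and substituting $\dot{x}=x\ln a$, $\dot{y}=y\ln b$ yields
\[
T''(t)=\ln^{2}a\,(x^{2}F_{xx}+xF_{x})+2\ln a\ln b\,xyF_{xy}+\ln^{2}b\,(y^{2}F_{yy}+yF_{y}).
\]
Here I would invoke homogeneity: if $f$ is homogeneous of degree $m$ then $F(\lambda x,\lambda y)=F(x,y)+m\ln\lambda$, so $xF_{x}+yF_{y}\equiv m$; differentiating this identity in $x$ and in $y$ gives $xF_{xx}+F_{x}=-yF_{xy}$ and $yF_{yy}+F_{y}=-xF_{xy}$, hence $x^{2}F_{xx}+xF_{x}=-xyF_{xy}$ and $y^{2}F_{yy}+yF_{y}=-xyF_{xy}$. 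Substituting these back makes the three terms assemble into a perfect square,
\[
T''(t)=-xyF_{xy}\left(\ln^{2}a-2\ln a\ln b+\ln^{2}b\right)=-xyF_{xy}\ln^{2}(b/a)=-xy\,\mathcal{I}\,\ln^{2}(b/a),
\]
since $\mathcal{I}=(\ln f)_{xy}=F_{xy}$.

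For the third derivative set $G(x,y)=xyF_{xy}=xy\,\mathcal{I}$, so that $T''(t)=-G(a^{t},b^{t})\ln^{2}(b/a)$; differentiating once more gives $T'''(t)=-\ln^{2}(b/a)\,(x\ln a\,G_{x}+y\ln b\,G_{y})$. The key observation now is that $F_{x}$ is homogeneous of degree $-1$, hence $F_{xy}$ is homogeneous of degree $-2$, hence $G=xyF_{xy}$ is homogeneous of degree $0$, so $xG_{x}+yG_{y}\equiv 0$. Replacing $yG_{y}$ by $-xG_{x}$ collapses $x\ln a\,G_{x}+y\ln b\,G_{y}=xG_{x}\ln(a/b)$, so that $T'''(t)=\ln^{3}(b/a)\,xG_{x}$. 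Finally I would compute $xG_{x}=x\,\partial_{x}(xy\mathcal{I})=xy(\mathcal{I}+x\mathcal{I}_{x})=xy(x\mathcal{I})_{x}$ and convert the logarithm using $\ln(x/y)=t\ln(a/b)$, i.e.\ $\ln^{3}(b/a)=-t^{-3}\ln^{3}(x/y)$, to get
\[
T'''(t)=-t^{-3}\,\frac{xy\ln^{3}(x/y)}{x-y}\,(x-y)(x\mathcal{I})_{x}=-Ct^{-3}\mathcal{J},
\]
which is \eqref{2.8}.

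The only genuine subtlety is the two applications of homogeneity --- first the differentiated Euler identity for $\ln f$, which is what turns $T''$ into a perfect square in $\ln(b/a)$, and then the degree-zero homogeneity of $xy(\ln f)_{xy}$, which is what makes $T'''$ reduce to a single $x$-derivative --- together with the bookkeeping that trades powers of $\ln(b/a)$ for powers of $\ln(x/y)$ and produces the constant $C$. Everything else is routine chain-rule differentiation; note that the formulas are asserted only for $t\neq 0$ (consistent with the domain of $f$, where $x\neq y$ whenever $a\neq b$), and that three-time differentiability of $f$ is exactly what is needed to reach $T'''$.
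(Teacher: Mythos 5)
Your computation is correct: the chain rule in the variables $x=a^{t}$, $y=b^{t}$, combined with the differentiated Euler identities $x^{2}(\ln f)_{xx}+x(\ln f)_{x}=-xy(\ln f)_{xy}$ and the degree-zero homogeneity of $xy(\ln f)_{xy}$, yields exactly the stated formulas for $T'$, $T''$ and $T'''$, including the sign of $C$. The paper itself imports this lemma from \cite{Yang.10(3).2007} without reproving it, and your derivation is the standard one given there, so nothing further is needed.
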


\begin{remark}
It follows from (\ref{2.8}) that 
\begin{equation}
\limfunc{sgn}({{T}^{\prime \prime \prime }(t)})=\mathcal{-}\limfunc{sgn}({t})%
\limfunc{sgn}(\mathcal{J)}.  \label{2.9}
\end{equation}
\end{remark}

The following property is also crucial in the proof of Theorem \ref{th2.2}.

\begin{lemma}[{\textbf{\protect\cite[Property 4]{Yang.10(3).2007}}}]
\label{lem2.2}{If }${{T}^{\prime }(t)}$ is continuous on $[q,p]$ or $[p,q]$.
Then ${\ln }\mathcal{H}_{f}(p,q)$ can be expressed in integral form as 
\begin{equation}
{\ln }\mathcal{H}_{f}(p,q)=\left\{ 
\begin{array}{ll}
\dfrac{1}{p-q}{\int_{q}^{p}{T}^{\prime }(t)d}t\text{ } & \bigskip \text{if }%
p\neq q \\ 
{{T}^{\prime }(q)}\text{ } & \text{\bigskip if }p=q%
\end{array}%
\right. ={\int_{0}^{1}{T}^{\prime }(tp+(1-t)q)dt.}  \label{2.10}
\end{equation}
\end{lemma}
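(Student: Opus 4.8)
The plan is to reduce everything to the Fundamental Theorem of Calculus applied to $T(t)=\ln f(a^{t},b^{t})$. First I would unwind Definition~\ref{d2.1}: when $pq(p-q)\neq 0$,
\[
\ln \mathcal{H}_{f}(p,q)=\frac{1}{p-q}\bigl(\ln f(a^{p},b^{p})-\ln f(a^{q},b^{q})\bigr)=\frac{T(p)-T(q)}{p-q},
\]
and in the cases $p\neq 0=q$ or $q\neq 0=p$ one uses $T(0)=\ln f(1,1)$ — legitimate since $\lim_{y\to x}f(x,y)$ is assumed to exist and be positive — to get $\ln \mathcal{H}_{f}(p,0)=(T(p)-T(0))/p$ and $\ln \mathcal{H}_{f}(0,q)=(T(q)-T(0))/q$. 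Hence in every case with $p\neq q$ we have $\ln \mathcal{H}_{f}(p,q)=(T(p)-T(q))/(p-q)$.

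Since $T'$ is assumed continuous on the closed interval with endpoints $p$ and $q$, the Fundamental Theorem of Calculus yields $T(p)-T(q)=\int_{q}^{p}T'(t)\,dt$, which is precisely the first line of the asserted identity. For the last expression, the affine substitution $u=tp+(1-t)q$, $du=(p-q)\,dt$, maps $[0,1]$ onto the interval with endpoints $q$ and $p$ and gives
\[
\int_{0}^{1}T'\bigl(tp+(1-t)q\bigr)\,dt=\frac{1}{p-q}\int_{q}^{p}T'(u)\,du,
\]
so the three quantities agree when $p\neq q$.

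For the diagonal case $p=q$ the substitution expression collapses: $\int_{0}^{1}T'(tp+(1-t)q)\,dt=\int_{0}^{1}T'(q)\,dt=T'(q)$, so it only remains to verify $\ln \mathcal{H}_{f}(q,q)=T'(q)$. This is immediate on comparing the formula for $\mathcal{H}_{f}(p,p;a,b)$ in~\eqref{2.2} (and $\mathcal{H}_{f}(0,0;a,b)$ in~\eqref{2.5}, interpreted as the limit $t\to 0$) with the expression for $T'(t)$ recorded in Lemma~\ref{lem2.1}; alternatively one may read the $p=q$ case as the limit $q\to p$ of the first line, using continuity of $T'$.

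The argument is essentially routine — FTC plus an affine change of variables — and I do not anticipate a genuine obstacle. The only point requiring care is the bookkeeping across the several branches of Definition~\ref{d2.1}: one must check that $T(0)=\ln f(1,1)$ and the diagonal value $T'(q)$ are exactly the continuous extensions that the single integral formula produces, so that the piecewise-defined $\mathcal{H}_{f}$ and the unified integral representation coincide on each branch.
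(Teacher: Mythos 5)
Your argument is correct and is the natural one: the paper itself imports Lemma~\ref{lem2.2} from \cite[Property 4]{Yang.10(3).2007} without reproving it, and the cited proof is exactly this combination of the Fundamental Theorem of Calculus for $T(t)=\ln f(a^{t},b^{t})$ with the affine substitution $u=tp+(1-t)q$, together with the branch-by-branch check against Definition~\ref{d2.1} that you carry out. Your care with the continuous extension $T(0)=\ln f(1,1)$ and with identifying the diagonal value $T'(q)$ via \eqref{2.2} and Lemma~\ref{lem2.1} is precisely the bookkeeping the statement requires.
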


We now recall the definition\ and criterion theorem of bivariate convex
(concave) functions.

\begin{definition}
Let $\Omega \subseteq \mathbb{R}^{2}$ be a convex domain and $E\subseteq 
\mathbb{R}$. Then a bivariate function $\Phi :\Omega \rightarrow E$ is said
to be convex (concave) on $\Omega $ if for any pairs $%
(x_{1},y_{1}),(x_{2},y_{2})\in \Omega $ and $\alpha ,\beta >0$ with $\alpha
+\beta =1$ the following inequality 
\begin{equation}
\Phi (\alpha x_{1}+\beta x_{2},\alpha y_{1}+\beta y_{2})\leq (\geq )\alpha
\Phi (x_{1},y_{1})+\beta \Phi (x_{2},y_{2})  \label{2.11}
\end{equation}%
holds.
\end{definition}

\begin{theorem}
\label{th2.1}Let $\Omega \subseteq \mathbb{R}^{2}$ be an open domain and $%
E\subseteq \mathbb{R}$ and the bivariate function $\Phi :\Omega \rightarrow
E $ is two-time differentiable. Then $\Phi (x,y)$ is strictly convex
(concave) over $\Omega $ if and only if both the following inequalities 
\begin{equation}
\Phi _{xx}>(<)0\text{ \ \ and\ }\Delta =\Phi _{xx}\Phi _{yy}-\Phi _{xy}^{2}>0
\label{2.12}
\end{equation}%
hold.
\end{theorem}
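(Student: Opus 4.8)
The plan is to reduce the bivariate convexity of $\Phi$ on $\Omega$ (which must be a convex open set for the notion to apply) to the one-variable second-derivative test, by restricting $\Phi$ to straight segments. Given $(x_{1},y_{1}),(x_{2},y_{2})\in\Omega$, I would set $(u,v)=(x_{2}-x_{1},y_{2}-y_{1})$ and
\[
g(t)=\Phi\bigl((1-t)x_{1}+tx_{2},\,(1-t)y_{1}+ty_{2}\bigr),\qquad t\in[0,1],
\]
so that the chain rule gives $g''(t)=\Phi_{xx}u^{2}+2\Phi_{xy}uv+\Phi_{yy}v^{2}$, with the partials evaluated along the segment, and inequality $(\ref{2.11})$ read at $t=\beta$ is exactly the one-variable convexity (concavity) inequality for $g$. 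Hence $\Phi$ is strictly convex (concave) on $\Omega$ if and only if every such $g$ is strictly convex (concave) on $[0,1]$, and the latter is controlled by the sign of $g''$.

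For the sufficiency direction — the one actually used in the later sections — I would assume $\Phi_{xx}>0$ and $\Delta=\Phi_{xx}\Phi_{yy}-\Phi_{xy}^{2}>0$ throughout $\Omega$ and complete the square:
\[
g''(t)=\Phi_{xx}\Bigl(u+\tfrac{\Phi_{xy}}{\Phi_{xx}}v\Bigr)^{2}+\frac{\Delta}{\Phi_{xx}}\,v^{2},
\]
which is a sum of two nonnegative terms that vanishes only when $v=0$ and $u+\tfrac{\Phi_{xy}}{\Phi_{xx}}v=0$, i.e. only when $(u,v)=(0,0)$. Thus for distinct endpoints $g''>0$ on $[0,1]$, so $g$ is strictly convex, and therefore $\Phi$ is strictly convex on $\Omega$; the concave case $\Phi_{xx}<0$, $\Delta>0$ is obtained by applying this to $-\Phi$.

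For the necessity direction I would fix $(x_{0},y_{0})\in\Omega$ and a direction $(u,v)\ne(0,0)$; strict convexity of $\Phi$ forces $t\mapsto\Phi(x_{0}+tu,y_{0}+tv)$ to be convex near $t=0$, hence its second derivative at $0$, namely $\Phi_{xx}u^{2}+2\Phi_{xy}uv+\Phi_{yy}v^{2}$, is $\ge 0$; taking $(u,v)=(1,0)$ gives $\Phi_{xx}\ge 0$ and positive semidefiniteness of the form gives $\Delta\ge 0$. The main obstacle is extracting the \emph{strict} inequalities $(\ref{2.12})$ from mere strict convexity: this is delicate and in fact need not hold in general (for example $\Phi(x,y)=x^{4}+y^{4}$ is strictly convex but has vanishing Hessian at the origin), so the mathematically safe reading — and all that the applications require — is that $(\ref{2.12})$ is a \emph{sufficient} condition for strict convexity (concavity). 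The remaining pieces, namely the elementary one-variable fact that $g''>0$ on an interval implies strict convexity and the bookkeeping between the two cases, are routine.
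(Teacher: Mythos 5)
The paper states Theorem \ref{th2.1} without any proof, treating it as a recalled textbook criterion, so there is no argument of the author's to compare yours against. Your sufficiency argument is the standard one and is correct: restricting $\Phi$ to the segment, computing $g''(t)=\Phi_{xx}u^{2}+2\Phi_{xy}uv+\Phi_{yy}v^{2}$, and completing the square to see that $\Phi_{xx}>0$, $\Delta>0$ force $g''>0$ for every nondegenerate segment, hence strict convexity; the concave case follows by passing to $-\Phi$. Your two side remarks are both well taken. First, $\Omega$ must be assumed convex (the paper's Definition of bivariate convexity already requires this, even though Theorem \ref{th2.1} only says ``open domain''). Second, and more importantly, the ``only if'' direction of the theorem as stated is simply false: strict convexity yields only $\Phi_{xx}\ge 0$ and $\Delta\ge 0$, and your counterexample $\Phi(x,y)=x^{4}+y^{4}$ (strictly convex, Hessian vanishing at the origin) is valid. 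This is a defect in the theorem's formulation rather than in your proof; it is harmless for the rest of the paper, since the proof of Theorem \ref{th2.2} invokes only the sufficiency direction, which you have established.
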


Next we are ready to give and prove criterion theorem for the bivariate
log-convexity of two-parameter homogeneous functions with respect to
parameter pair.

\begin{theorem}
\label{th2.2}Let $f:\mathbb{R}_{+}^{2}\backslash \{(x,x),x\in \mathbb{R}%
_{+}\}\rightarrow \mathbb{R}_{+}$ be a homogenous and three-time
differentiable function. If 
\begin{equation*}
\mathcal{J}=(x-y)(x\mathcal{I})_{x}<(>)0\text{, where }\mathcal{I}=\left(
\ln f\right) _{xy},
\end{equation*}%
then the bivariate function $(p,q)\rightarrow \mathcal{H}_{f}(p,q)$ is
strictly log-convex (log-concave) on $\mathbb{R}_{+}^{2}$ and log-concave
(log-convex) on $\mathbb{R}_{-}^{2}$.
\end{theorem}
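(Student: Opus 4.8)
The plan is to set $\Phi(p,q)=\ln\mathcal{H}_f(p,q)$ and verify the two conditions of Theorem~\ref{th2.1}, namely $\Phi_{pp}>(<)0$ and $\Delta=\Phi_{pp}\Phi_{qq}-\Phi_{pq}^2>0$, on each of $\mathbb{R}_+^2$ and $\mathbb{R}_-^2$. The starting point is the integral representation from Lemma~\ref{lem2.2}:
\begin{equation*}
\Phi(p,q)=\int_0^1 T'\bigl(tp+(1-t)q\bigr)\,dt,
\end{equation*}
which is valid and smooth because $f$ is three-time differentiable (so $T'$ is $C^2$ away from $0$, hence on any segment lying in $\mathbb{R}_+$ or in $\mathbb{R}_-$). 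Differentiating under the integral sign gives
\begin{align*}
\Phi_{pp}(p,q)&=\int_0^1 t^2\,T''\bigl(tp+(1-t)q\bigr)\,dt,\qquad
\Phi_{qq}(p,q)=\int_0^1 (1-t)^2\,T''\bigl(tp+(1-t)q\bigr)\,dt,\\
\Phi_{pq}(p,q)&=\int_0^1 t(1-t)\,T''\bigl(tp+(1-t)q\bigr)\,dt.
\end{align*}

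For the first condition, recall from Lemma~\ref{lem2.1} that $T''(u)=-xy\,\mathcal{I}\ln^2(b/a)$ with $x=a^u,y=b^u$, so the sign of $T''$ is the sign of $-\mathcal{I}$. The hypothesis is stated in terms of $\mathcal{J}=(x-y)(x\mathcal{I})_x$, and by the third line of (\ref{2.8}) together with Remark following it, $\operatorname{sgn}(T'''(u))=-\operatorname{sgn}(u)\operatorname{sgn}(\mathcal{J})$. So when $\mathcal{J}<0$, we have $T'''>0$ on $\mathbb{R}_+$; since $T'$ is smooth across $0$ and (one checks from the formula for $T''$, or from $T''(0)=0$ by continuity) $T''$ vanishes at $0$, monotonicity of $T''$ from $T'''>0$ forces $T''>0$ on $\mathbb{R}_+$ and $T''<0$ on $\mathbb{R}_-$. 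Plugging $T''>0$ into the integral for $\Phi_{pp}$ over a segment in $\mathbb{R}_+^2$ gives $\Phi_{pp}>0$; over a segment in $\mathbb{R}_-^2$, $T''<0$ throughout gives $\Phi_{pp}<0$. This establishes the first inequality of (\ref{2.12}) with the correct signs (and the $(>)$ case is symmetric).

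The main obstacle is the discriminant inequality $\Delta>0$, since $\Phi_{pp}\Phi_{qq}-\Phi_{pq}^2$ is a difference of integral products and its sign is not visible term-by-term. The clean way is the Cauchy--Schwarz route: writing $d\mu=|T''(tp+(1-t)q)|\,dt$ (a positive measure, as $T''$ has constant sign on the segment when $(p,q)\in\mathbb{R}_+^2$ or $\mathbb{R}_-^2$), we have $|\Phi_{pp}|=\int t^2\,d\mu$, $|\Phi_{qq}|=\int(1-t)^2\,d\mu$, $|\Phi_{pq}|=\int t(1-t)\,d\mu$, so
\begin{equation*}
\Phi_{pp}\Phi_{qq}-\Phi_{pq}^2
=\Bigl(\int t^2\,d\mu\Bigr)\Bigl(\int(1-t)^2\,d\mu\Bigr)-\Bigl(\int t(1-t)\,d\mu\Bigr)^2\ge 0
\end{equation*}
by Cauchy--Schwarz applied to the functions $t\mapsto t$ and $t\mapsto 1-t$ in $L^2(d\mu)$; the sign works out the same on $\mathbb{R}_-^2$ because $\Phi_{pp}$ and $\Phi_{qq}$ change sign together. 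Equality in Cauchy--Schwarz would require $t$ and $1-t$ to be proportional $\mu$-a.e., which is impossible on a set of positive $\mu$-measure (and $\mu$ is not the zero measure since $T''\ne 0$ on the open segment, as $\mathcal{I}\ne 0$ there under the strict hypothesis $\mathcal{J}\ne 0$); hence $\Delta>0$ strictly. The remaining care points are: justifying differentiation under the integral (dominated convergence on compact subsegments), handling the boundary/degenerate cases $p=q$ or segments touching $0$ by the continuity built into Definition~\ref{d2.1} and a limiting argument, and treating $a=b$ trivially since $\mathcal{H}_f(p,q;a,a)=a$ is constant. By Theorem~\ref{th2.1}, $\Phi=\ln\mathcal{H}_f$ is then strictly convex on $\mathbb{R}_+^2$ and strictly concave on $\mathbb{R}_-^2$ when $\mathcal{J}<0$, with the roles reversed when $\mathcal{J}>0$, which is the claim.
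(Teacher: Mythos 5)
Your overall strategy --- the integral representation of Lemma~\ref{lem2.2}, differentiation under the integral sign, and Cauchy--Schwarz for the Hessian determinant --- is exactly the paper's, but there is a genuine computational error at the heart of your argument: the second-order partials of $\Phi(p,q)=\int_0^1 T'(tp+(1-t)q)\,dt$ involve the \emph{third} derivative of $T$, not the second. Each differentiation in $p$ produces a factor $t$ \emph{and} raises the order of the derivative by one, so $\Phi_{pp}=\int_0^1 t^2\,T'''(tp+(1-t)q)\,dt$, and similarly for $\Phi_{qq}$ and $\Phi_{pq}$. With the correct formulas the proof is immediate: by (\ref{2.9}), $\operatorname{sgn}(T''')=-\operatorname{sgn}(t)\operatorname{sgn}(\mathcal{J})$, so $T'''$ has a constant, known sign on any segment contained in $\mathbb{R}_{+}^{2}$ or in $\mathbb{R}_{-}^{2}$; this gives the sign of $\Phi_{pp}$ directly (with the required sign reversal between the two quadrants), and your Cauchy--Schwarz argument, run with $d\mu=|T'''(tp+(1-t)q)|\,dt$, gives $\Delta>0$ with strictness exactly as you describe.

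The error is not merely notational, because the detour you take to determine the sign of $T''$ is both unjustified and, on $\mathbb{R}_{-}^{2}$, would lead to the wrong conclusion. You assert that $T'$ is smooth across $0$ and that $T''(0)=0$; neither holds for a general homogeneous $f$ on $\mathbb{R}_{+}^{2}\setminus\{(x,x)\}$ --- for instance for $f(x,y)=|x-y|$ (used in Theorem~\ref{th3.2}) one has $T(t)=\ln|a^{t}-b^{t}|\to-\infty$ and $T'(t)\sim 1/t$ as $t\to 0$. Moreover, even granting $T''(0)=0$: when $\mathcal{J}<0$, (\ref{2.9}) gives $T'''>0$ on $\mathbb{R}_{+}$ but $T'''<0$ on $\mathbb{R}_{-}$, so $T''$ would be decreasing on $\mathbb{R}_{-}$ and increasing on $\mathbb{R}_{+}$ with minimum value $0$ at the origin, i.e.\ $T''\geq 0$ on \emph{both} half-lines; your formula $\Phi_{pp}=\int_0^1 t^{2}T''\,dt$ would then predict log-convexity on $\mathbb{R}_{-}^{2}$ as well, contradicting the statement you are trying to prove. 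Replacing $T''$ by $T'''$ in the three integrals removes the need for any of this and restores the correct sign change between the two quadrants.
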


\begin{proof}
It suffices to verify (\ref{2.12}) holds.

(1) When $(p,q)\in \mathbb{R}_{+}^{2}$. It is obvious to (\ref{2.10}) be
true. Two times partial derivative calculations for (\ref{2.10}) lead to{\ 
\begin{eqnarray*}
\frac{\partial ^{2}\ln \mathcal{H}_{f}(p,q)}{\partial p^{2}} &=&{%
\int_{0}^{1}t^{2}{T}^{\prime \prime \prime }(tp+(1-t)q)dt,} \\
\frac{\partial ^{2}\ln \mathcal{H}_{f}(p,q)}{\partial q^{2}} &=&{%
\int_{0}^{1}(1-t)^{2}{T}^{\prime \prime \prime }(tp+(1-t)q)dt,} \\
\frac{\partial ^{2}\ln \mathcal{H}_{f}(p,q)}{\partial p\partial q} &=&{%
\int_{0}^{1}t(1-t){T}^{\prime \prime \prime }(tp+(1-t)q)dt.}
\end{eqnarray*}%
}

By (\ref{2.9}), for $t\in \lbrack 0,1]$ and $(p,q)\in \mathbb{R}_{+}^{2}$ we
see ${{T}^{\prime \prime \prime }(tp+(1-t)q)>(<)0}$ if $\mathcal{J}=(x-y)(x%
\mathcal{I})_{x}<(>)0$, it follows that $\frac{\partial ^{2}\ln \mathcal{H}%
_{f}(p,q)}{\partial p^{2}}={\int_{0}^{1}t^{2}{T}^{\prime \prime \prime
}(tp+(1-t)q)dt}>(<)0$.

On the other hand, using Cauch-Scharz inequality gives%
\begin{eqnarray*}
\Delta &=&\frac{\partial ^{2}\ln \mathcal{H}_{f}(p,q)}{\partial p^{2}}\frac{%
\partial ^{2}\ln \mathcal{H}_{f}(p,q)}{\partial q^{2}}-\left( \frac{\partial
^{2}\ln \mathcal{H}_{f}(p,q)}{\partial p\partial q}\right) ^{2} \\
&=&{\int_{0}^{1}t^{2}{T}^{\prime \prime \prime }(tp+(1-t)q)dt}\text{ }{%
\int_{0}^{1}(1-t)^{2}{T}^{\prime \prime \prime }(tp+(1-t)q)dt} \\
&&-\left( {\int_{0}^{1}t(1-t){T}^{\prime \prime \prime }(tp+(1-t)q)dt}\text{ 
}\right) ^{2}\geq 0.
\end{eqnarray*}%
With equality if and only if $t=k(1-t)$ for all $t\in \lbrack 0,1]$, where $%
k $ is a constant. But this is obviously impossible, which means the above
inequality is strict.

Applying Theorem \ref{th2.1}, it follows that bivariate function $%
(p,q)\rightarrow \ln \mathcal{H}_{f}(p,q)$ is strictly convex (concave) on $%
(0,\infty )\times (0,\infty )$ if $\mathcal{J}=(x-y)(x\mathcal{I})_{x}<(>)0$%
, which is exactly our result required.

(2) When $(p,q)\in \mathbb{R}_{-}^{2}$. By (\ref{2.9}) we see ${{T}^{\prime
\prime \prime }(tp+(1-t)q)<(>)0}$ for $t\in \lbrack 0,1]$ if $\mathcal{J}%
=(x-y)(x\mathcal{I})_{x}<(>)0$. It follows that\ $\frac{\partial ^{2}\ln 
\mathcal{H}_{f}(p,q)}{\partial p^{2}}<(>)0$. Also, $\Delta \geq 0$ is still
valid and strict. From Theorem \ref{th2.1} it follows that bivariate
function $(p,q)\rightarrow \ln \mathcal{H}_{f}(p,q)$ is strictly concave
(convex) on $\mathbb{R}_{-}^{2}$ if $\mathcal{J}=(x-y)(x\mathcal{I}%
)_{x}<(>)0 $.

This completes the proof.
\end{proof}

\section{\textsc{Main Results}}

Equipped with Theorem \ref{th2.2}, we are in a position to solve the problem
for bivariate log-convexity of the more extended means $\boldsymbol{F}%
(p,q;r,s;a,b)$ in a parameter pair $(p,q)$.

\begin{theorem}
\label{th3.1}For fixed $(r,s)\in \mathbb{R}^{2},(a,b)\in \mathbb{R}_{+}^{2}$%
, the more extended means $\boldsymbol{F}(p,q;r,s;a,b)$ are strictly
bivariate log-concave (log-convex) on $[0,\infty )\times \lbrack 0,\infty )$
and strictly bivariate log-convex (log-concave) on $(-\infty ,0]\times
(-\infty ,0]$ with respect to $(p,q)$ if $r+s>(<)0$.
\end{theorem}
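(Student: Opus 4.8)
The plan is to reduce Theorem \ref{th3.1} about the four-parameter means $\boldsymbol{F}(p,q;r,s;a,b)$ to the already-established criterion for the two-parameter homogeneous functions, namely Theorem \ref{th2.2}. The key observation is that, for fixed $(r,s)$, the map $(p,q)\mapsto \boldsymbol{F}(p,q;r,s;a,b)$ is itself of the form $\mathcal{H}_{g}(p,q;a,b)$ for an appropriate homogeneous generator $g$. Indeed, recall from the introduction that $\boldsymbol{F}(p,q;r,s;a,b)=\mathcal{H}_{f}(p,q;a,b)$ with $f(x,y)=S_{r,s}(x,y)$ (the Stolarsky mean viewed as a function of $(x,y)$); this is the sense in which the more extended means are ``generated by Stolarsky means.'' So the first step is to verify carefully that $f=S_{r,s}$ satisfies the hypotheses of Theorem \ref{th2.2}: it is a positive, homogeneous (of degree $1$), three-times differentiable function on $\mathbb{R}_+^2\setminus\{(x,x)\}$, and the limit $\lim_{y\to x}S_{r,s}(x,y)=x$ exists and is positive, so that $\mathcal{H}_f$ is well-defined in all the boundary cases $pqrs(p-q)(r-s)=0$ collected in the Definition.

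The second and main step is to compute the sign of the discriminant quantity
\begin{equation*}
\mathcal{J}=(x-y)(x\mathcal{I})_x,\qquad \mathcal{I}=(\ln f)_{xy},
\end{equation*}
for $f=S_{r,s}$, and to show that $\operatorname{sgn}\mathcal{J}=-\operatorname{sgn}(r+s)$. This is the place where the hypothesis $r+s>(<)0$ enters, and I expect it to be the principal obstacle. One route is a direct computation from the explicit formula \eqref{1.1} for $S_{r,s}$, but that is messy because of the several cases. A cleaner route exploits homogeneity: since $S_{r,s}$ is $1$-homogeneous, $\ln f(x,y)=\ln x+\phi(u)$ with $u=\ln(y/x)$, so $\mathcal{I}=(\ln f)_{xy}$ and then $\mathcal{J}$ reduce to a single-variable expression in $u$ (equivalently in $t=y/x$), and the condition $\mathcal{J}<0$ on all of $\mathbb{R}_+^2$ becomes a one-variable inequality. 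In fact this computation has essentially been done in the literature on the monotonicity and log-convexity of $\boldsymbol{F}$: by Theorem Y (\cite[Theorem 2.3]{Yang.149286.2008}) the univariate log-convexity of $\boldsymbol{F}(p,q;r,s;a,b)$ in $p$ (or $q$) is governed by exactly the sign of $r+s$, and the univariate result in \cite{Yang.10(3).2007} is itself driven by the sign of this same $\mathcal{J}$ for $f=S_{r,s}$; so the needed identity $\operatorname{sgn}\mathcal{J}=-\operatorname{sgn}(r+s)$ can be quoted, or re-derived in a few lines, from that work.

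With that sign computation in hand, the proof finishes immediately: apply Theorem \ref{th2.2} with $f=S_{r,s}$. If $r+s>0$ then $\mathcal{J}<0$, hence $(p,q)\mapsto \mathcal{H}_{f}(p,q)=\boldsymbol{F}(p,q;r,s;a,b)$ is strictly log-convex on $\mathbb{R}_+^2$ and strictly log-concave on $\mathbb{R}_-^2$; wait—this is the opposite of what is claimed, so let me restate: Theorem \ref{th2.2} gives log-convex on $\mathbb{R}_+^2$ precisely when $\mathcal{J}<0$. Since we want log-concavity on the positive quadrant when $r+s>0$, we need $\mathcal{J}>0$ there, i.e. $\operatorname{sgn}\mathcal{J}=+\operatorname{sgn}(r+s)$; the correct identity is therefore $\operatorname{sgn}\mathcal{J}=\operatorname{sgn}(r+s)$ (consistent with Theorem Y, where $r+s>0$ yields log-\emph{concavity} in $p$), and verifying that sign is the content of the main step. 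The last small point is the passage from the open quadrants $\mathbb{R}_\pm^2$, where Theorem \ref{th2.2} applies directly, to the closed half-quadrants $[0,\infty)^2$ and $(-\infty,0]^2$ in the statement: this follows by continuity of $(p,q)\mapsto\ln\boldsymbol{F}(p,q;r,s;a,b)$ up to the boundary (the boundary values are the limits recorded in the Definition), since strict convexity/concavity on an open convex set together with continuity on its closure yields (non-strict, and in fact still strict on the interior) convexity/concavity on the closure; a brief remark handling the degenerate boundary cases $p=0$ or $q=0$ suffices.
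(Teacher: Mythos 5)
Your proposal is correct and follows essentially the same route as the paper: identify $\boldsymbol{F}(p,q;r,s;a,b)=\mathcal{H}_{f}(p,q;a,b)$ with $f=S_{r,s}$, quote from \cite{Yang.149286.2008} that $\operatorname{sgn}\mathcal{J}=\operatorname{sgn}(r+s)$ (your self-correction lands on the right sign convention, matching the paper's statement that $\mathcal{J}<(>)0$ when $r+s<(>)0$), apply Theorem \ref{th2.2}, and pass to the closed quadrants by continuity at the boundary cases $p=0$ or $q=0$. Your remark on the closure step is in fact slightly more careful than the paper's one-line appeal to continuity.
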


\begin{proof}
It follows from \cite[Proof of Theorem 2.3]{Yang.149286.2008} that $\mathcal{%
J}=(x-y)(x\mathcal{I})_{x}<(>)0$ if $r+s<(>)0$ for $f=S_{r,s}(x,y)$.
Applying Theorem \ref{th2.2} and noting that clearly $\boldsymbol{F}%
(p,q;r,s;a,b)$ are continuous at $(p,q)=(c_{1},0),(0,c_{2}),(0,0)$, $%
c_{1}c_{2}\neq 0$, our desired result follows.
\end{proof}

With $(r,s)=(1,0),(2,1),(1,1),(3/2,1/2)$, the following corollary are
immediate.

\begin{corollary}
\label{Cor.3.1}Stolarsky means $S_{p,q}(a,b)$, Gini means $G_{p,q}(a,b)$,
two-parameter identric (exponential) means $I_{p,q}(a,b)$ and two-parameter
Heronian means $He_{p,q}(a,b)$ are all strictly bivariate log-concave on $%
[0,\infty )\times \lbrack 0,\infty )$ and strictly bivariate log-convex on $%
(-\infty ,0]\times (-\infty ,0]$ with respect to $(p,q)$.
\end{corollary}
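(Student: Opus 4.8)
The plan is to reduce Corollary \ref{Cor.3.1} directly to Theorem \ref{th3.1} by simply specializing the parameter pair $(r,s)$. Recall from the discussion following the Definition that
\[
S_{p,q}(a,b)=\boldsymbol{F}(p,q;1,0;a,b),\quad G_{p,q}(a,b)=\boldsymbol{F}(p,q;2,1;a,b),
\]
\[
I_{p,q}(a,b)=\boldsymbol{F}(p,q;1,1;a,b),\quad He_{p,q}(a,b)=\boldsymbol{F}(p,q;3/2,1/2;a,b).
\]
For each of these four choices the quantity $r+s$ equals $1$, $3$, $2$, $2$ respectively, so in every case $r+s>0$. Hence Theorem \ref{th3.1} applies with the sign choice $r+s>0$, yielding that each of these four two-parameter families is strictly bivariate log-concave on $[0,\infty)\times[0,\infty)$ and strictly bivariate log-convex on $(-\infty,0]\times(-\infty,0]$ with respect to $(p,q)$. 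This is exactly the assertion of the corollary, so nothing further is needed beyond invoking the theorem four times.

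The only point that requires a word of care is that the identities $S_{p,q}=\boldsymbol{F}(p,q;1,0;\cdot)$, etc., must be checked not merely on the generic open set $pqrs(p-q)(r-s)\neq0$ but on the closed quadrants, i.e. including the degenerate lines $p=0$, $q=0$, $p=q$. The cleanest way to handle this is to note that both sides are defined there as limits (see the Definition and formulas (\ref{1.1})--(\ref{1.6})), and that the identification of the limits is already recorded in \cite[Table 1]{Yang.149286.2008}; alternatively one can remark, as in the proof of Theorem \ref{th3.1}, that $\boldsymbol{F}(p,q;r,s;a,b)$ is continuous at the exceptional points $(c_1,0)$, $(0,c_2)$, $(0,0)$, so the closed-quadrant statement follows from the open-quadrant statement by continuity. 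Thus the log-concavity/log-convexity inequality (\ref{2.11}) for $\ln\boldsymbol{F}$, known to be strict on the open quadrants, extends to the closed quadrants, and strictness on the open part is all that the corollary claims (the boundary being lower-dimensional, it does not affect strictness of the convexity inequality for pairs of distinct points).

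There is really no substantive obstacle here: the work was done in Theorem \ref{th2.2} (the Cauchy--Schwarz argument giving $\Delta>0$ together with the sign of $T'''$ from (\ref{2.9})) and in Theorem \ref{th3.1} (the computation, imported from \cite[Proof of Theorem 2.3]{Yang.149286.2008}, that $\mathcal{J}=(x-y)(x\mathcal{I})_x$ has the sign of $-(r+s)$ for $f=S_{r,s}$). The corollary is a pure specialization. If anything, the "hard part" is purely bookkeeping: making sure the four listed means are correctly matched to their $(r,s)$-values and that each resulting $r+s$ is positive — which, as noted above, it manifestly is. I would therefore present the proof in two or three lines: state the four identifications, observe $r+s>0$ in each case, and apply Theorem \ref{th3.1}.
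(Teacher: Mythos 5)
Your proof is correct and follows exactly the paper's route: the paper likewise obtains Corollary \ref{Cor.3.1} immediately from Theorem \ref{th3.1} by specializing $(r,s)=(1,0),(2,1),(1,1),(3/2,1/2)$, each of which satisfies $r+s>0$. Your additional remark on the degenerate lines $p=0$, $q=0$, $p=q$ is a sensible precaution, but it is already absorbed into the continuity observation made in the proof of Theorem \ref{th3.1} itself.
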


\section{\textsc{Applications}}

Many classical and new inequalities for bivariate means can easily follow
from Theorem \ref{th3.1}. For simpleness and convenience of statements in
the sequel, we denote $\boldsymbol{F}(p,q;r,s;a,b)$ by $\boldsymbol{F}(p,q)$%
, and note that for $pr-ps\neq 0$ 
\begin{eqnarray*}
\mathbf{F}(p,2p) &=&\left( \frac{a^{pr}+b^{pr}}{a^{ps}+b^{ps}}\right)
^{1/(pr-ps)}=G_{pr,ps}(a,b), \\
\mathbf{F}(p,0) &=&\left( {\frac{L(a^{pr},b^{pr})}{L(a^{ps},b^{ps})}}\right)
^{\frac{1}{p(r-s)}}=S_{pr,ps}(a,b), \\
\boldsymbol{F}(p,p) &=&\left( {\frac{I(a^{pr},b^{pr})}{I(a^{ps},b^{ps})}}%
\right) ^{\frac{1}{p(r-s)}}=I_{pr,ps}(a,b), \\
\mathbf{F}(p/2,3p/2) &=&\left( {\dfrac{a^{pr}+(\sqrt{ab})^{pr}+b^{pr}}{%
a^{ps}+(\sqrt{ab})^{ps}+b^{ps}}}\right) ^{1/(pr-ps)}=He_{pr,ps}(a,b);
\end{eqnarray*}%
for $pr-ps\neq 0$ they are defined as corresponding limits.

The following are some practical examples of Theorem \ref{th3.1}.

\begin{Application}
\textbf{Some generalizations of classical inequalities for means.}

\begin{itemize}
\item \textbf{Generalized Lin Inequality}. From bivariate log-convexity of
Stolarsky means it follows that $\sqrt{\mathbf{F}(1/3,2/3)\mathbf{F}(1,0)}%
\leq \mathbf{F}(2/3,1/3)$, simplifying yields $\mathbf{F}(1,0)\leq \mathbf{F}%
(1/3,2/3)$, that is, 
\begin{equation}
S_{r,s}(a,b)\leq G_{r/3,s/3}(a,b).  \label{3.1a}
\end{equation}%
Putting $(r,s)=(1,0)$, $(1,1)$ yield $L(a,b)\leq A_{1/3}(a,b)$, $I(a,b)\leq
Z_{1/3}(a,b)$. In which the former is Lin inequality \cite[Theorem 1]%
{Ling.81.1974}, the latter was proved first by Yang \cite[(5.16)]%
{Yang.10(3).2007}.

\item \textbf{Generalized Jia-Cao Inequality. }Similarly, $\mathbf{F}%
^{2/3}(1,0)\mathbf{F}^{1/3}(1/4,3/4)\leq $ $\mathbf{F}(3/4,1/4)$ yields $%
\mathbf{F}(1,0)\leq \mathbf{F}(3/4,1/4)$, that is, 
\begin{equation}
S_{r,s}(a,b)\leq He_{r/2,s/2}(a,b).  \label{3.1b}
\end{equation}%
Putting $(r,s)=(1,0)$ yields $L(a,b)\leq He_{1/2}(a,b)$, which is Jia-Cao
inequality \cite[Theorem 1.1]{Jiagao.4(4).2003}.

\item \textbf{Generalized S\'{a}ndor Inequality }\cite{Sandor.15(1997)}:
From $\sqrt{\mathbf{F}(2,0)\mathbf{F}(0,2)}\leq \mathbf{F}(1,1)\ $it follows
that $\mathbf{F}(1,1)\geq \mathbf{F}(2,0)$, that is, 
\begin{equation}
I_{r,s}(a,b)\geq S_{2r,2s}(a,b).  \label{3.1c}
\end{equation}%
Putting $(r,s)=(1,0)$, $(1,1)$, $(2,1)$ yield $I(a,b)\geq L_{2}(a,b)$, $%
Y(a,b)\geq I_{2}(a,b)$, $Z(a,b)\geq A_{2}(a,b)$, here we have used the
formula $I(a^{2},b^{2})/I(a,b)=Z(a,b)$ (see \cite{Sandor.38.1993}, \cite%
{Yang.6(4).101.2005}). In which the first and the third inequality due to S%
\'{a}ndor \cite{Sandor.40.1990}, \cite{Sandor.15(1997)}, the second one was
presented by Yang \cite[(5.12)]{Yang.10(3).2007}\textbf{.}
\end{itemize}
\end{Application}

\begin{Application}
\textbf{Some new inequalities for two-parameter means.}

\begin{itemize}
\item \textbf{New inequality 1.} The bivariate log-convexity of Stolarsky
means yields $\mathbf{F}^{3/4}(2/3,1/3)\mathbf{F}^{1/4}(1,0)\leq \mathbf{F}%
(3/4,1/4)$, it follows that 
\begin{equation*}
\mathbf{F}(1,0)\leq \mathbf{F}^{4}(3/4,1/4)\mathbf{F}^{-3}(2/3,1/3),
\end{equation*}%
that is, 
\begin{equation}
S_{r,s}(a,b)\leq He_{r/2,s/2}^{4}(a,b)G_{r/3,s/3}^{-3}(a,b).  \label{3.1d}
\end{equation}%
Putting $(r,s)=(1,0)$ yields $L(a,b)\leq He_{1/2}^{4}(a,b)A_{1/3}^{-3}(a,b)$
given in \cite[(4.5)]{Yang.1(1).2010}, which is obviously superior to Lin
and Jia-Cao\textbf{\ }inequality because $L\leq He_{1/2}\leq A_{1/3}$, .

\item \textbf{New inequality 2.} From $\mathbf{F}^{1/5}(1,1)\mathbf{F}%
^{4/5}(3/4,1/4)\leq \mathbf{F}(4/5,2/5)$ it follows that $\mathbf{F}%
(1,1)\leq \mathbf{F}^{5}(4/5,2/5)\mathbf{F}^{-4}(3/4,1/4)$, which yields, 
\begin{equation}
I_{r,s}(a,b)\leq G_{2r/5,2s/5}^{5}(a,b)He_{r/2,s/2}^{-4}(a,b).  \label{3.1e}
\end{equation}%
Putting $(r,s)=(1,0)$, $(2,1)$ yield $I(a,b)\leq
A_{2/5}^{5}(a,b)He_{1/2}^{-4}(a,b)$, $Z(a,b)\leq
G_{4/5,2/5}^{5}(a,b)He_{1,1/2}^{-4}(a,b)$, here we have used the formula $%
I(a^{2},b^{2})/I(a,b)=Z(a,b)$ again.
\end{itemize}
\end{Application}

Lastly, We close this paper by giving a result for bivariate log-convexity
of the two-parameter homogeneous function of difference \cite%
{Yang.6(4).101.2005}\emph{\ }and illustrating its applications.

Substituting $f(x,y)=D(x,y)=|x-y|$ ($x,y>0$ with $x\neq y$) into the
two-parameter homogeneous function $\mathcal{H}_{f}(p,q;a,b)$ leads to 
\begin{equation}
\mathcal{H}_{D}(p,q;a,b):=\left\{ 
\begin{array}{ll}
\Big|{\dfrac{a^{p}-b^{p}}{a^{q}-b^{q}}}\Big|^{1/(p-q)} & \text{if }p\neq q{,}
\\ 
e^{1/p}I^{1/p}(a^{p},b^{p}) & \text{if }p=q,%
\end{array}%
\right.  \label{3.4}
\end{equation}%
which is called two-parameter homogeneous function of difference.

For $\mathcal{H}_{D}(p,q;a,b)$, we have shown $\mathcal{J}=(x-y)(x\mathcal{I}%
)_{x}<0$, where $\mathcal{I=(}\ln \mathcal{D)}_{xy}$, in \cite[pp. 510]%
{Yang.10(3).2007} and by Theorem \ref{th2.2} we obtain

\begin{theorem}
\label{th3.2}$\mathcal{H}_{D}(p,q;a,b)$ is strictly bivariate log-convex on $%
(0,\infty )\times (0,\infty )$ and bivariate log-convex on $(-\infty
,0)\times (-\infty ,0)$ with respect to $(p,q)$.
\end{theorem}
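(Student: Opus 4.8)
The plan is to apply Theorem~\ref{th2.2} directly to the function $f(x,y)=D(x,y)=|x-y|$, exactly as the statement is set up. The only genuine input required is the sign of the discriminant quantity $\mathcal{J}=(x-y)(x\mathcal{I})_{x}$ where $\mathcal{I}=(\ln D)_{xy}$, and this is precisely the computation already carried out in \cite[pp. 510]{Yang.10(3).2007}; since the excerpt permits me to invoke results stated earlier and references, I would simply cite that $\mathcal{J}<0$ for $f=D$. Once $\mathcal{J}<0$ is in hand, Theorem~\ref{th2.2} with the strict inequality branch ``$\mathcal{J}<0$'' yields immediately that $(p,q)\mapsto\mathcal{H}_{D}(p,q)$ is strictly log-convex on $\mathbb{R}_{+}^{2}$ and log-concave on $\mathbb{R}_{-}^{2}$.

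There is, however, a visible discrepancy between what Theorem~\ref{th2.2} delivers (log-concave on $\mathbb{R}_{-}^{2}$ when $\mathcal{J}<0$) and what Theorem~\ref{th3.2} asserts (log-convex on $(-\infty,0)\times(-\infty,0)$). So the main obstacle is not a computation but reconciling this sign: I would resolve it by using the homogeneity/reciprocity identity for $\mathcal{H}_{D}$. Specifically, from the definition one checks that $\mathcal{H}_{D}(-p,-q;a,b)$ relates to $\mathcal{H}_{D}(p,q;a,b)$ by a simple factor involving $(ab)$-powers, essentially $\mathcal{H}_{D}(-p,-q;a,b)=(ab)^{-?}\mathcal{H}_{D}(p,q;a,b)$ up to an explicit correction; more precisely, using $|a^{-p}-b^{-p}| = (ab)^{-p}|a^{p}-b^{p}|$ one gets $\mathcal{H}_{D}(-p,-q;a,b) = \mathcal{H}_{D}(p,q;a,b)/(ab)$ (the correction is a multiplicative constant in $a,b$ but crucially does \emph{not} depend on $p,q$). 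Taking logarithms, $\ln\mathcal{H}_{D}(-p,-q) = \ln\mathcal{H}_{D}(p,q) - \ln(ab)$, so the map $(p,q)\mapsto\ln\mathcal{H}_{D}(p,q)$ on $\mathbb{R}_{-}^{2}$ is obtained from the map on $\mathbb{R}_{+}^{2}$ by precomposing with the linear reflection $(p,q)\mapsto(-p,-q)$ and subtracting a constant. A linear change of variables preserves convexity, hence strict convexity of $\ln\mathcal{H}_{D}$ on $\mathbb{R}_{+}^{2}$ forces strict convexity on $\mathbb{R}_{-}^{2}$ as well. That is why both quadrants give log-convexity, in contrast to the generic alternation in Theorem~\ref{th2.2}: here the $p$-derivative structure is special because $D$ is the modulus of a skew-symmetric expression.

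Concretely the steps are: (i) record $\mathcal{J}=(x-y)(x\mathcal{I})_{x}<0$ for $\mathcal{D}=|x-y|$, citing \cite{Yang.10(3).2007}; (ii) apply Theorem~\ref{th2.2} to obtain strict bivariate log-convexity on $\mathbb{R}_{+}^{2}$; (iii) establish the reciprocity identity $\mathcal{H}_{D}(-p,-q;a,b) = \mathcal{H}_{D}(p,q;a,b)/(ab)$ from the formula (\ref{3.4}) (checking the $p=q$ case separately via the $I$-mean formula and the known identity $I(a^{-p},b^{-p}) = (ab)^{-p}I(a^{p},b^{p})$); (iv) conclude from (ii) and (iii) plus invariance of convexity under linear substitution that $\ln\mathcal{H}_{D}$ is strictly convex on $\mathbb{R}_{-}^{2}$ too. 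I expect step (iii) to be the only place needing care — verifying the reflection identity on the diagonal $p=q$ and at the limiting values $p=0$ or $q=0$ — while steps (i), (ii), (iv) are essentially bookkeeping given Theorems~\ref{th2.1} and~\ref{th2.2}.
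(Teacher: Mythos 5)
Your steps (i) and (ii) are exactly the paper's proof: quote $\mathcal{J}=(x-y)(x\mathcal{I})_{x}<0$ for $f=D$ from \cite[pp. 510]{Yang.10(3).2007} and invoke Theorem \ref{th2.2}. The difficulty is your step (iii): the reflection identity you propose is false. From $|a^{-p}-b^{-p}|=(ab)^{-p}|a^{p}-b^{p}|$ \emph{and} the fact that the outer exponent also changes sign, $1/\bigl((-p)-(-q)\bigr)=-1/(p-q)$, one gets
\begin{equation*}
\mathcal{H}_{D}(-p,-q;a,b)=\left( (ab)^{q-p}\,\frac{|a^{p}-b^{p}|}{|a^{q}-b^{q}|}\right) ^{-1/(p-q)}=\frac{ab}{\mathcal{H}_{D}(p,q;a,b)},
\end{equation*}
a \emph{reciprocal}, not your $\mathcal{H}_{D}(p,q;a,b)/(ab)$. (Your version is already excluded by homogeneity: $\mathcal{H}_{D}(-p,-q;\cdot ,\cdot )$ is homogeneous of degree $1$ in $(a,b)$, while $\mathcal{H}_{D}(p,q;\cdot ,\cdot )/(ab)$ has degree $-1$; even your own diagonal check via $I(a^{-p},b^{-p})=(ab)^{-p}I(a^{p},b^{p})$, carried through, yields $\mathcal{H}_{D}(-p,-p)=ab/\mathcal{H}_{D}(p,p)$.) With the correct identity, $\ln \mathcal{H}_{D}(p,q)=\ln (ab)-\ln \mathcal{H}_{D}(-p,-q)$ on $\mathbb{R}_{-}^{2}$, and the minus sign flips convexity into concavity: strict log-convexity on $\mathbb{R}_{+}^{2}$ forces strict log-\emph{concavity} on $\mathbb{R}_{-}^{2}$, which is precisely the alternation Theorem \ref{th2.2} predicts. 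A direct check confirms it: with $T(t)=\ln |a^{t}-b^{t}|$ and $c=a/b>1$ one finds $T^{\prime \prime \prime }(t)=\ln ^{3}(c)\,c^{t}(c^{t}+1)/(c^{t}-1)^{3}$, positive for $t>0$ and negative for $t<0$.

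So the tension you correctly noticed between Theorem \ref{th2.2} and the stated Theorem \ref{th3.2} is not to be argued away: the second ``log-convex'' in the statement is a typographical error for ``log-concave,'' and the paper's own proof is nothing more than the one-line appeal to Theorem \ref{th2.2}, which delivers concavity on the negative quadrant. Your steps (iii)--(iv) attempt to prove the literally stated (false) claim and rest on the sign slip above; they should be deleted and the conclusion on $(-\infty ,0)\times (-\infty ,0)$ corrected to log-concavity. Nothing downstream is affected, since Corollaries \ref{Cor.3.2} and \ref{Cor.3.3} use only the positive-quadrant inequality (\ref{3.7}).
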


By Corollary \ref{Cor.3.1} and Theorem \ref{th3.2}, we have the following.

\begin{corollary}
\label{Cor.3.2}For any $(a,b),$ $(p_{1},q_{1}),(p_{2},q_{2})\in \mathbb{R}%
_{+}^{2}$ and $\alpha ,\beta >0$ with $\alpha +\beta =1$, the following
double inequalities%
\begin{equation}
1\leq \frac{S_{\alpha p_{1}+\beta p_{2},\alpha q_{1}+\beta q_{2}}(a,b)}{%
S_{p_{1},q_{1}}^{\alpha }(a,b)S_{p_{2},q_{2}}^{\beta }(a,b)}\leq e^{\tfrac{%
\alpha }{L(p_{1},q_{1})}+\tfrac{\beta }{L(p_{2},q_{2})}-\tfrac{1}{L(\alpha
p_{1}+\beta p_{2},\alpha q_{1}+\beta q_{2})}}  \label{3.5}
\end{equation}%
hold.
\end{corollary}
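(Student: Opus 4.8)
\textbf{Proof proposal for Corollary \ref{Cor.3.2}.}

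The plan is to read off the double inequality \eqref{3.5} from the bivariate log-concavity of $S_{p,q}(a,b)$ (Corollary \ref{Cor.3.1}) together with the bivariate log-convexity of $\mathcal{H}_D(p,q;a,b)$ (Theorem \ref{th3.2}), after recognizing that the exponential on the right is exactly $\mathcal{H}_D$ evaluated at the relevant points. First I would fix $(a,b)\in\mathbb{R}_+^2$ and write $\phi(p,q)=\ln S_{p,q}(a,b)$, $\psi(p,q)=\ln\mathcal{H}_D(p,q;a,b)$. Since $S_{p,q}$ is strictly bivariate log-concave on $[0,\infty)^2$, applying \eqref{2.11} with the pairs $(p_1,q_1),(p_2,q_2)$ and weights $\alpha,\beta$ gives
\begin{equation*}
\phi(\alpha p_1+\beta p_2,\alpha q_1+\beta q_2)\ \ge\ \alpha\,\phi(p_1,q_1)+\beta\,\phi(p_2,q_2),
\end{equation*}
which exponentiates to the left-hand inequality $S_{p_1,q_1}^{\alpha}S_{p_2,q_2}^{\beta}\le S_{\alpha p_1+\beta p_2,\alpha q_1+\beta q_2}$, i.e. the ratio in \eqref{3.5} is $\ge 1$.

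For the right-hand inequality, the key observation is the factorization of $S_{p,q}$ in terms of $\mathcal{H}_D$: directly from \eqref{1.1} and \eqref{3.4} one has, for $p\ne q$ with $pq\ne0$,
\begin{equation*}
S_{p,q}(a,b)=\left(\frac{q(a^p-b^p)}{p(a^q-b^q)}\right)^{1/(p-q)}
=\left|\frac{q}{p}\right|^{1/(p-q)}\mathcal{H}_D(p,q;a,b),
\end{equation*}
and $|q/p|^{1/(p-q)}=\exp\!\big(-\tfrac{1}{p-q}\ln(p/q)\big)=e^{-1/L(p,q)}$ by the definition of the logarithmic mean $L$. Hence $\ln S_{p,q}=\psi(p,q)-1/L(p,q)$. (One checks the boundary cases $p=0$, $q=0$, $p=q$ by the limiting definitions; this is routine and the identity $\ln S_{p,q}=\psi-1/L$ extends by continuity.) Then
\begin{equation*}
\ln\frac{S_{\alpha p_1+\beta p_2,\alpha q_1+\beta q_2}}{S_{p_1,q_1}^{\alpha}S_{p_2,q_2}^{\beta}}
=\Big[\psi\big(\alpha p_1+\beta p_2,\alpha q_1+\beta q_2\big)-\alpha\psi(p_1,q_1)-\beta\psi(p_2,q_2)\Big]
-\frac{1}{L(\alpha p_1+\beta p_2,\alpha q_1+\beta q_2)}+\frac{\alpha}{L(p_1,q_1)}+\frac{\beta}{L(p_2,q_2)}.
\end{equation*}
By Theorem \ref{th3.2}, $\psi$ is convex on $(0,\infty)^2$, so the bracketed quantity is $\le 0$; discarding it yields exactly the upper bound in \eqref{3.5}.

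I expect the only genuine obstacle to be bookkeeping rather than substance: verifying the identity $S_{p,q}=e^{-1/L(p,q)}\,\mathcal{H}_D(p,q;a,b)$ uniformly across all the degenerate parameter configurations ($pq(p-q)=0$), where both $\mathcal{H}_D$ and $1/L$ must be interpreted as limits, and confirming that the convexity inequality for $\psi$ from Theorem \ref{th3.2}, stated on the open orthant, still applies when some of the points $(p_i,q_i)$ or their convex combination land on the axes — this follows from continuity of $\psi$ up to the boundary, exactly as in the passage from Theorem \ref{th2.2} to Theorem \ref{th3.1}. Everything else is a direct exponentiation of the two convexity/concavity inequalities already established.
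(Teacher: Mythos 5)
Your proposal is correct and follows essentially the same route as the paper: the left inequality from the bivariate log-concavity of $S_{p,q}$ (Corollary \ref{Cor.3.1}), and the right one from the bivariate log-convexity of $\mathcal{H}_{D}$ (Theorem \ref{th3.2}) combined with the identity $\mathcal{H}_{D}(p,q;a,b)=e^{1/L(p,q)}S_{p,q}(a,b)$, which is exactly the decomposition the paper substitutes into its inequality (\ref{3.7}). Writing the argument additively via $\ln S_{p,q}=\psi(p,q)-1/L(p,q)$ is only a cosmetic repackaging of the paper's multiplicative substitution.
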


\begin{proof}
Corollary \ref{Cor.3.1} and Theorem \ref{th3.2} yield 
\begin{eqnarray}
S_{p_{1},q_{1}}^{\alpha }(a,b)S_{p_{2},q_{2}}^{\beta }(a,b) &\leq &S_{\alpha
p_{1}+\beta p_{2},\alpha q_{1}+\beta q_{2}}(a,b),  \label{3.6} \\
\mathcal{H}_{D}^{\alpha }(p_{1},q_{1};a,b)\mathcal{H}_{D}^{\beta
}(p_{2},q_{2};a,b) &\geq &\mathcal{H}_{D}(\alpha p_{1}+\beta p_{2},\alpha
q_{1}+\beta q_{2};a,b),  \label{3.7}
\end{eqnarray}%
respectively. (\ref{3.6}) implies that the first inequality of (\ref{3.5}).
To obtain the second one, note $\mathcal{H}%
_{D}(p,q;a,b)=(p/q)^{1/(p-q)}S_{p,q}(a,b)=e^{1/L(p,q)}S_{p,q}(a,b)$ ($p,q>0$%
) and substitute into (\ref{3.7}), thus (\ref{3.7}) can be written as 
\begin{equation*}
e^{\frac{\alpha }{L(p_{1},q_{1})}+\frac{\beta }{L(p_{2},q_{2})}%
}S_{p_{1},q_{1}}^{\alpha }(a,b)S_{p_{2},q_{2}}^{\beta }(a,b)\geq e^{\frac{1}{%
L(\alpha p_{1}+\beta p_{2},\alpha q_{1}+\beta q_{2})}}S_{\alpha p_{1}+\beta
p_{2},\alpha q_{1}+\beta q_{2}}(a,b),
\end{equation*}%
which is equivalent to the second one of (\ref{3.5}).

The proof is complete.
\end{proof}

For Gini means, we also have similar result.

\begin{corollary}
\label{Cor.3.3}For any $(a,b),$ $(p_{1},q_{1}),(p_{2},q_{2})\in \mathbb{R}%
_{+}^{2}$ and $\alpha ,\beta >0$ with $\alpha +\beta =1$, the following
double inequality%
\begin{equation}
1\leq \frac{G_{\alpha p_{1}+\beta p_{2},\alpha q_{1}+\beta q_{2}}(a,b)}{%
G_{p_{1},q_{1}}^{\alpha }(a,b)G_{p_{2},q_{2}}^{\beta }(a,b)}\leq e^{\tfrac{%
\alpha }{L(p_{1},q_{1})}+\tfrac{\beta }{L(p_{2},q_{2})}-\tfrac{1}{L(\alpha
p_{1}+\beta p_{2},\alpha q_{1}+\beta q_{2})}}  \label{3.8}
\end{equation}%
holds.
\end{corollary}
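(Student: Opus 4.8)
The proof of Corollary \ref{Cor.3.3} should run in exact parallel with the proof of Corollary \ref{Cor.3.2}, so the plan is to imitate that argument step by step, replacing Stolarsky means by Gini means and replacing the auxiliary difference function $D(x,y)=|x-y|$ by the sum function $E(x,y)=x+y$. First I would invoke Corollary \ref{Cor.3.1}, which gives the bivariate log-concavity of $G_{p,q}(a,b)$ on $\mathbb{R}_{+}^{2}$, to obtain the inequality $G_{p_{1},q_{1}}^{\alpha }(a,b)G_{p_{2},q_{2}}^{\beta }(a,b)\leq G_{\alpha p_{1}+\beta p_{2},\alpha q_{1}+\beta q_{2}}(a,b)$; this is precisely the left inequality of (\ref{3.8}).

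For the right inequality I need a companion two-parameter homogeneous function whose bivariate log-convexity on $\mathbb{R}_{+}^{2}$, combined with a multiplicative identity linking it to $G_{p,q}$, produces the correction factor $e^{\alpha /L(p_{1},q_{1})+\beta /L(p_{2},q_{2})}$. The natural candidate is $\mathcal{H}_{E}(p,q;a,b)$ with $f=E(x,y)=x+y$, since $\mathcal{H}_{E}(p,q;a,b)=\big((a^{p}+b^{p})/(a^{q}+b^{q})\big)^{1/(p-q)}=G_{p,q}(a,b)$ for $p\neq q$, while at $p=q$ one computes $\mathcal{H}_{E}(p,p;a,b)=e^{1/p}\,Z^{1/p}(a^p,b^p)$-type expression; more to the point, the ratio $\mathcal{H}_{E}(p,q;a,b)/G_{p,q}(a,b)$ should equal $(p/q)^{1/(p-q)}=e^{1/L(p,q)}$ by the same computation used in the proof of Corollary \ref{Cor.3.2} (the factor $p/q$ comes only from differentiating the powers $a^{t},b^{t}$ and is independent of whether $f$ is $D$ or $E$). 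So the key preliminary step is to check, via Lemma \ref{lem2.1}, that $\mathcal{J}=(x-y)(x\mathcal{I})_{x}<0$ for $\mathcal{I}=(\ln E)_{xy}$: here $\ln E=\ln(x+y)$, so $(\ln E)_{xy}=1/(x+y)^2>0$ and $x(\ln E)_{xy}=x/(x+y)^2$, whose $x$-derivative is $(y-x)/(x+y)^3$, giving $(x-y)(x\mathcal{I})_{x}=-(x-y)^2/(x+y)^3<0$. By Theorem \ref{th2.2} this makes $(p,q)\mapsto\mathcal{H}_{E}(p,q;a,b)$ strictly bivariate log-convex on $\mathbb{R}_{+}^{2}$.

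With that in hand, bivariate log-convexity yields $\mathcal{H}_{E}^{\alpha }(p_{1},q_{1};a,b)\,\mathcal{H}_{E}^{\beta }(p_{2},q_{2};a,b)\geq \mathcal{H}_{E}(\alpha p_{1}+\beta p_{2},\alpha q_{1}+\beta q_{2};a,b)$. Substituting $\mathcal{H}_{E}(p,q;a,b)=e^{1/L(p,q)}G_{p,q}(a,b)$ into both sides and cancelling the common positive factor $G_{p_{1},q_{1}}^{\alpha }G_{p_{2},q_{2}}^{\beta }$-versus-$G_{\alpha p_{1}+\beta p_{2},\alpha q_{1}+\beta q_{2}}$ after rearranging exactly as in the proof of Corollary \ref{Cor.3.2}, the exponential terms collect to give the right-hand inequality of (\ref{3.8}). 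The only genuine obstacle I anticipate is confirming the identity $\mathcal{H}_{E}(p,q;a,b)=e^{1/L(p,q)}G_{p,q}(a,b)$ cleanly at the diagonal $p=q$ and near the axes $pq=0$ (where both $G_{p,q}$ and $L(p,q)$ are defined by limits), and checking that $\mathcal{H}_{E}$ is itself continuous there so that Theorem \ref{th2.2}'s conclusion extends to the closed quadrant if needed; for $p,q>0$ with $p\neq q$ the identity is immediate from $(p/q)^{1/(p-q)}=\exp\!\big(\tfrac{\ln p-\ln q}{p-q}\big)=e^{1/L(p,q)}$, so the continuity/limit bookkeeping is the one place requiring care rather than mere transcription.
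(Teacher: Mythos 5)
Your proof of the left inequality is fine and matches the paper: it is just the bivariate log-concavity of $G_{p,q}$ on $\mathbb{R}_{+}^{2}$ from Corollary \ref{Cor.3.1}. But your route to the right inequality breaks down at two points, and the second one is fatal. First, a sign error: for $E(x,y)=x+y$ one has $(\ln E)_{x}=1/(x+y)$, hence $\mathcal{I}=(\ln E)_{xy}=-1/(x+y)^{2}<0$, not $+1/(x+y)^{2}$; carrying this through gives $\mathcal{J}=(x-y)(x\mathcal{I})_{x}=+(x-y)^{2}/(x+y)^{3}>0$, so Theorem \ref{th2.2} makes $\mathcal{H}_{E}$ log-\emph{concave} on $\mathbb{R}_{+}^{2}$, not log-convex. (Your claimed log-convexity would directly contradict Corollary \ref{Cor.3.1}, since, second and more importantly, $\mathcal{H}_{E}(p,q;a,b)=\bigl((a^{p}+b^{p})/(a^{q}+b^{q})\bigr)^{1/(p-q)}$ is \emph{identically equal} to $G_{p,q}(a,b)$.) The factor $(p/q)^{1/(p-q)}=e^{1/L(p,q)}$ in the Stolarsky case does not come from ``differentiating the powers''; it comes from the explicit normalization $q/p$ built into the definition (\ref{1.1}) of $S_{p,q}$, which is absent from the definition (\ref{1.2}) of $G_{p,q}$. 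Hence your proposed identity $\mathcal{H}_{E}(p,q;a,b)=e^{1/L(p,q)}G_{p,q}(a,b)$ is false, and with it the entire mechanism for generating the exponential correction term on the right of (\ref{3.8}): the log-concavity of $\mathcal{H}_{E}$ merely reproduces the left inequality and yields no upper bound at all.

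The paper's actual argument supplies the missing idea. It keeps the difference function $D$ but evaluates it at doubled parameters, using the factorization $a^{2p}-b^{2p}=(a^{p}-b^{p})(a^{p}+b^{p})$, which gives the identity $\mathcal{H}_{D}(p,q;a,b)\,G_{p,q}(a,b)=\mathcal{H}_{D}^{2}(2p,2q;a,b)$. Applying the strict log-convexity of $\mathcal{H}_{D}$ (Theorem \ref{th3.2}) at the points $(2p_{i},2q_{i})$ and substituting this identity converts the resulting inequality into
\begin{equation*}
\frac{G_{\alpha p_{1}+\beta p_{2},\alpha q_{1}+\beta q_{2}}}{G_{p_{1},q_{1}}^{\alpha }G_{p_{2},q_{2}}^{\beta }}\leq \frac{\mathcal{H}_{D}^{\alpha }(p_{1},q_{1})\,\mathcal{H}_{D}^{\beta }(p_{2},q_{2})}{\mathcal{H}_{D}(\alpha p_{1}+\beta p_{2},\alpha q_{1}+\beta q_{2})},
\end{equation*}
after which the relation $\mathcal{H}_{D}(p,q;a,b)=e^{1/L(p,q)}S_{p,q}(a,b)$ together with the already-proved left inequality of (\ref{3.5}) bounds the right-hand side by the desired exponential. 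You would need to adopt this (or an equivalent) bridge back to $\mathcal{H}_{D}$; no choice of auxiliary generator $f$ with $\mathcal{H}_{f}=e^{1/L(p,q)}G_{p,q}$ is available off the shelf in the paper's framework.
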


\begin{proof}
The first inequality follows from bivariate log-convexity of Gini means,
that is, Corollary \ref{Cor.3.1}. It remains to prove the second one. Form
Theorem \ref{th3.2} we have%
\begin{equation*}
\mathcal{H}_{D}^{\alpha }(2p_{1},2q_{1};a,b)\mathcal{H}_{D}^{\beta
}(2p_{2},2q_{2};a,b)\geq \mathcal{H}_{D}(2\alpha p_{1}+2\beta p_{2},2\alpha
q_{1}+2\beta q_{2};a,b).
\end{equation*}%
It is easy to verify that$\mathcal{\ H}_{D}(p,q;a,b)G_{p,q}(a,b)=\mathcal{H}%
_{D}^{2}(2p,2q;a,b)$, which is applied to the above inequality and a simple
equivalent transformation lead to 
\begin{equation*}
\frac{G_{\alpha p_{1}+\beta p_{2},\alpha q_{1}+\beta q_{2}}(a,b)}{%
G_{p_{1},q_{1}}^{\alpha }(a,b)G_{p_{2},q_{2}}^{\beta }(a,b)}\leq \frac{%
\mathcal{H}_{D}^{\alpha }(p_{1},q_{1};a,b)\mathcal{H}_{D}^{\beta
}(p_{2},q_{2};a,b)}{\mathcal{H}_{D}(\alpha p_{1}+\beta p_{2},\alpha
q_{1}+\beta q_{2};a,b)}.
\end{equation*}%
Using relation $\mathcal{H}_{D}(p,q;a,b)=e^{1/L(p,q)}S_{p,q}(a,b)$ ($p,q>0$)
mentioned previous again and the first inequality of (\ref{3.5}), the right
side of the above inequality is lees than or equal to 
\begin{equation*}
e^{\tfrac{\alpha }{L(p_{1},q_{1})}+\tfrac{\beta }{L(p_{2},q_{2})}-\tfrac{1}{%
L(\alpha p_{1}+\beta p_{2},\alpha q_{1}+\beta q_{2})}.}
\end{equation*}

The proof ends.
\end{proof}

From Corollary \ref{Cor.3.2}, \ref{Cor.3.3}, we can easily estimate the
upper and lower bounds of the ratio of means. Some of which are contained in
the following.

\begin{Application}
\textbf{Some classical double estimation inequalities.}

\begin{itemize}
\item \textbf{Stolarsky-Yang Inequality }\cite[(1.6)]{Stolarsky.87.1980}%
\textbf{,} \cite[Remark 9]{Yang.10(3).2007}.

Putting $(p_{1},q_{1})=(4/3,2/3),(p_{2},q_{2})=(2/3,4/3),(\alpha ,\beta
)=(1/2,1/2)$ in (\ref{3.5}) and simplifying yield 
\begin{equation}
1\leq I/A_{2/3}\leq \sqrt{8}e^{-1}\approx 1.0405.  \label{3.9}
\end{equation}

\item \textbf{S\'{a}ndor-Yang Inequality }\cite{Sandor.38.1993}, \cite[%
Remark 9]{Yang.10(3).2007}\textbf{.}

Putting $(p_{1},q_{1})=(1/2,3/2),(p_{2},q_{2})=(3/2,1/2),(\alpha ,\beta
)=(1/6,5/6)$ in (\ref{3.5}) and simplifying yield 
\begin{equation}
1\leq A_{2/3}/He\leq 3/\sqrt{8}\approx 1.0607.  \label{3.10}
\end{equation}
\end{itemize}
\end{Application}

\begin{Application}
\textbf{Some new double estimation inequalities}.

\begin{itemize}
\item \textbf{New double estimation inequalities 1.} Putting $%
(p_{1},q_{1})=(1,1),(p_{2},q_{2})=(3/2,1/2),(\alpha ,\beta )=(1/3,2/3)$ in (%
\ref{3.5}) and simplifying and some simple equivalent transformations yield 
\begin{equation}
0.9249\approx 16\sqrt{2}e^{-1}/9\leq I/A_{2/3}^{3}He^{-2}\leq 1.
\label{3.11}
\end{equation}

\item \textbf{New double estimation inequalities 2.} Putting $%
(p_{1},q_{1})=(6/5,6/5),(p_{2},q_{2})=(4/5,4/5),(\alpha ,\beta )=(1/2,1/2)$
in (\ref{3.5}), (\ref{3.8}) and simplifying yield 
\begin{eqnarray}
1 &\leq &I/\sqrt{I_{6/5}I_{4/5}}\leq e^{1/24}\approx 1.0425,  \label{3.12} \\
1 &\leq &Z/\sqrt{Z_{6/5}Z_{4/5}}\leq e^{1/24}\approx 1.0425.  \label{3.13}
\end{eqnarray}

\item \textbf{New double estimation inequalities 3. }Putting $%
(p_{1},q_{1})=(1/2,3/2),(p_{2},q_{2})=(3/2,1/2),(\alpha ,\beta )=(1/2,1/2)$
in (\ref{3.8}) and simplifying yield 
\begin{equation}
1\leq Z/(2A-G)\leq 3e^{-1}\approx 1.1036.  \label{3.14}
\end{equation}
\end{itemize}
\end{Application}

Additionally, many other unknown and interesting inequalities for means can
be derived from theorems and corollaries in this section and no longer list
here.

\end{document}